\newtheorem{theorem}{Theorem}[section]
\theoremstyle{remark}
\numberwithin{equation}{section}
\begin{document}
\title{An Update on the Classification of Rank 2 Weak Fano Threefolds}

\author{Joseph W. Cutrone}
\address{Department of Mathematics, Johns Hopkins University, 3400 N. Charles St, Baltimore, MD 21218}
\email{jcutron2@jhu.edu}

\author{Nicholas A. Marshburn}
\address{Department of Mathematics, Johns Hopkins University, 3400 N. Charles St., Baltimore, MD 21218}
\email{nmarshb1@jhu.edu}

\subjclass[2024]{}

\begin{abstract}
In this paper, an update on the classification of smooth weak Fano threefolds with Picard number two and small anti-canonical maps is given. Geometric constructions are provided for previously open numerical cases by blowing up certain curves on smooth Fano threefolds of Picard number one. This paper provides updated tables in the Appendix and reduces the 14 remaining E1-E* open cases to four. 

\end{abstract}

\maketitle

\section{Introduction}
A smooth Fano variety is a smooth projective variety whose anticanonical class is ample. A smooth weak Fano variety is a smooth projective variety whose anticanonical class is both nef and big. This relaxation of the ampleness condition gives rise to a wider and more complex class of threefolds. Weak Fano threefolds are of interest because they appear in the study of the birational geometry of threefolds, particularly in connection with the Minimal Model Program (MMP). They often serve as building blocks or intermediate steps in understanding the structure of more complicated three-dimensional varieties. Besides the contribution to the classification of Sarkisov links, these weak Fano threefolds have applications to the construction of $G_2$-manifolds and Calabi-Yau threefolds (\cite{CHNP12},\cite{CHNP13}).

In this paper, we provide an update to the numerical classification of smooth weak Fano threefolds $X$ with Picard number two that started with \cite{Tak89}, \cite{JPR05}, \cite{JPR07}, \cite{Kal09} and \cite{CM13}. In these papers, weak Fano threefolds were obtained by blowing up a curve or a point on a smooth Fano threefold of Picard rank 1 under different assumptions. The outcome of these papers was the complete numerical classification of Sarkisov links of special type between smooth Fano threefolds of Picard number one and in turn, the classification of the central models of class rank 2. There are over 200 distinct families of these smooth weak Fano threefolds with Picard number two. The geometric construction of many cases was left open. 

In a series of papers that followed, namely \cite{BL12}, \cite{BL15}, \cite{ACM17}, significant progress was made to complete the geometric classification of these links, which meant to either explicitly construct the Sarkisov link with the known numerical invariants or to show no such link could occur. The geometric classification is mostly done, but not yet complete. For a more in depth summary of the classification scheme for the rank two weak Fano threefold classification problem, see the corresponding sections of \cite{CHNP13}. 

\section{Remarks on the Remaining Open Cases}
The remaining open cases on the E1-E1 table are numbers 28,59,61,80, which are the blow-ups of index one Fano threefolds. 

Cases 59,61,80 also appear on the tables of \cite{JPR07} and the methods to show the link must be of type E1-E1 used in Theorem 3.1 do not produce the required contradiction. It is possible that both links exist with the numerical invariants given: one of type E1-E1 and one of type E1-dP (for cases 59,80) and E1-CB (for case 61). 

Case 28 on the E1-E1 table is the most challenging as the numerical link starts with a del Pezzo threefold $V_2$ of index 2. As $V_2$ does not admit a projective model as a complete intersection in a homogeneous space, the methods of this paper and those of \cite{ACM17} do not apply. 

We leave these cases for future work. 

\subsection{Results}
The main result of this paper is to settle the geometric existence of all but four cases in the original tables of \cite{CM13}. The most updated tables for Sarkisov links of types E*-E* are located in the Appendix. 

\section{Background and Notation}
Let $X$ be a smooth weak Fano threefold of Picard rank two, and $\psi: X\to X'$ its anticanonical morphism. Then either $\psi: X\to X'$ is divisorial, in which case $X'$ is a rank one Gorenstein Fano threefold with canonical non-terminal singularities, or $\psi: X\to X'$ is small, and $X'$ is a rank one non-$\mathbb{Q}$-factorial Gorenstein Fano 3-fold with terminal singularities. These singularities turn out to be isolated cDV singularities and in many cases, only ordinary double points. 

 In addition to the $K_X$-trivial contraction $\psi$, the rank two weak Fano threefold $X$ admits a unique extremal contraction $\phi: X \to Y$, where $Y$ is a (possibly singular) Fano threefold of Picard number 1. By \cite{Ko89}, $\psi$ induces a flop $\chi$ to a smooth Picard rank two threefold $X^+$. $X^+$ is weak Fano with the same anticanonical degree as $X$. The small anticanonical morphism associated to $X^+$ is denoted by $\psi^+$, and the unique extremal contraction is denoted by $\phi^+: X^+ \to Y^+$. This is summarized in the following commutative diagram:
\begin{equation}
\xymatrix{X \ar@{-->}[rr]^{\chi} \ar[d]^{\phi} \ar[dr]^{\psi}& & X^{+} \ar[d]^{\phi^{+}} \ar[dl]_{\psi^{+}} \\
          Y & X' & Y^{+}}
\end{equation}
where $\chi$ is an isomorphism outside of the exceptional locus of $\psi$. 

Mori and Mukai's classification of non-singular threefold extremal rays (\cite{MM84}) along with $\rho=2$ restrict $X$ sufficiently to allow for a complete numerical classification of rank two weak Fano threefolds. In \cite{JPR07}, the authors gave a complete numerical classification in the case that at most one of the Mori contractions $\phi$ or $\phi^+$ contracts a divisor (i.e., is of type E). The authors of this paper, in \cite{CM13}, completed the numerical classification when both Mori contractions are of type E. 

\section{New Constructions}
All varieties are defined over $\mathbb{C}$ and all varieties are assumed to be smooth unless indicated otherwise. 

The numerical classification of Sarkisov links of type E1-E* with small anticanonical morphism can be found in \cite{CM13}. In that original table of 111 numerical links, 11 were proven to be geometrically realisable, 13 not to be geometrically realisable, and 87 numerical links were left open. Later papers (e.g., \cite{ACM17}, \cite{BL12}) tried to complete the geometric classification of these cases and after \cite{ACM17} specifically, there were only twelve open cases on the E1-E1 tables. The remaining open cases on \cite{ACM17} are numbers:  2,10,28,39,47,59,61,67,72,78,80,102. In the following two theorems, we construct the geometric existence for eight of these cases.   

\begin{theorem}
The numerical invariants listed in the E1-E1 table for cases 2,10,39,47,67,72,78, and 102, are all geometrically realizable as a Sarkisov link of Fano threefolds. 
\end{theorem}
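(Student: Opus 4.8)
The plan is to realize each listed case by the standard blow-up/flop construction encoded in the commutative diagram of Section 2: for each numerical entry I would exhibit a smooth Fano threefold $Y$ of Picard number one together with a smooth irreducible curve $C \subset Y$ whose genus and anticanonical degree match the invariants prescribed by the E1-E1 table, set $X = \mathrm{Bl}_C\, Y$ with exceptional divisor $E$ and blow-down $\phi \colon X \to Y$, and then verify that $X$ is the desired weak Fano threefold sitting over a predicted target $Y^+$. Since the rank one Fano threefolds are completely classified, the first task reduces to producing, inside a fixed $Y$, a curve of the correct numerical type that is smooth and irreducible. I would build $C$ by the usual devices: as a complete intersection or a component cut out by sections of an appropriate line bundle, or as a curve lying on a smooth anticanonical $K3$ surface section $S \in |-K_Y|$, whose Picard lattice controls exactly which curve classes are effective and representable by smooth curves. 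The surface-section approach has the further advantage of making the later nef computation tractable, since the curves eventually contracted by $\psi$ will typically be forced to lie on the strict transform of $S$.

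Next I would check the weak Fano conditions. Writing $-K_X = \phi^*(-K_Y) - E$, bigness follows from the inequality $(-K_X)^3 > 0$, a routine intersection computation in terms of $\deg C$, $g(C)$, and the invariants of $Y$. Nefness is the delicate point: I must show $(-K_X \cdot \ell) \ge 0$ for every irreducible curve $\ell \subset X$. The only candidates for negativity are strict transforms of low-degree curves on $Y$ (lines and conics) that are secant or multisecant to $C$, together with curves contained in $E$. I would bound these intersection numbers using the chosen realization of $C$ and arrange the construction so that the curves attaining $-K_X \cdot \ell = 0$ form only a finite set.

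Those finitely many $(-K_X)$-trivial curves constitute precisely the locus contracted by the anticanonical morphism $\psi$, so confirming that they form a curve rather than a divisor simultaneously establishes that $\psi$ is small, as required for a genuine Sarkisov link, and forces the singularities on $X'$ to be isolated cDV points. After flopping these curves to obtain $X^+$, I would identify the second extremal contraction $\phi^+ \colon X^+ \to Y^+$ and verify that it is the inverse of the blow-up of a smooth curve $C^+$ on the predicted rank one Fano $Y^+$, with invariants matching the table. In practice the cleanest route is to find a single ambient construction exhibiting $X$ and $X^+$ as two small resolutions of a common variety, or to analyze the extremal rays of $X^+$ directly via Mori theory, so that the $Y^+$ side becomes transparent.

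I expect the principal obstacle to be the interplay between the existence-and-smoothness of $C$ and the nefness of $-K_X$: forcing $C$ to carry the prescribed degree and genus tends to produce multisecant lines or conics that threaten the nef condition, so the construction must be chosen so that such curves occur only in the controlled, $(-K_X)$-trivial configuration that yields a small contraction rather than a divisorial or non-nef one. Matching the flopped side to the predicted $(Y^+, C^+)$ is a secondary difficulty that the symmetry of a well-chosen ambient model should largely resolve.
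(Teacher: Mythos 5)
Your outline reproduces the general framework but stalls at precisely the step that constitutes the actual content of this theorem. For the eight cases in question, the existence of the smooth curve $C\subset Y$ with the prescribed degree and genus, and the fact that $X=\mathrm{Bl}_C\,Y$ is weak Fano, were already settled in Propositions 3.1, 4.1 and 6.1 of \cite{ACM17}; these cases were left open there only because the authors could not show that the anticanonical morphism $\psi$ is \emph{small}. On that point your proposal offers just the assertion that one should ``arrange the construction so that the $(-K_X)$-trivial curves form only a finite set'' and then ``confirm that they form a curve rather than a divisor'' --- but that is the question, not an argument, and your nefness-style bounding of multisecant lines and conics cannot by itself distinguish a small contraction from a divisorial one. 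The paper's mechanism is concrete: take $C$ inside a K3 surface $S\in|-K_Y|$, suppose $\psi$ contracts a divisor $D\sim aH+bE$, use $K_X^2D=0$ together with $\mathrm{Pic}\,X=\langle H,E\rangle$ to pin down the class of $D$ (e.g.\ $D\sim 2H-3E$ in case 10), then restrict to the strict transform $\widetilde S$, whose Picard lattice is generated by $H_{\widetilde S}$ and $C$, and compute $(D|_{\widetilde S})^2$. If $D$ were contracted to a point, $-K_XD^2=0$ would force $(D|_{\widetilde S})^2=0$; if $D$ were contracted to a curve, $D$ would be a conic bundle by \cite{JPR07} and $D|_{\widetilde S}$ a multiple $a_1l_1$ of a single fibre (two disjoint fibres are excluded since rational curves on the K3 are $(-2)$-curves), giving $(D|_{\widetilde S})^2=-2a_1^2$ with $l_1$ an integral class. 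In every case the computed square (e.g.\ $-8$, $-10$, $-12$, $-4$) is neither $0$ nor compatible with these constraints --- either $-2a_1^2$ has no integer solution, or solving for $l_1$ forces a non-integral class such as $\tfrac12(2H_{\widetilde S}-3C)$, or (case 67) a lattice divisibility argument excludes smooth rational curves on $\widetilde S$ altogether --- so no divisor is contracted and $\psi$ is small. Without an argument of this nature your plan cannot close.

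A secondary divergence: your proposal to identify $\phi^+\colon X^+\to Y^+$ explicitly, by exhibiting $X$ and $X^+$ as two small resolutions of a common ambient model or by a direct Mori-theoretic analysis of $X^+$, is far more than is needed and would be genuinely difficult to execute for these cases. The paper obtains the E1 type of $\phi^+$ for free from the completeness of the numerical classification: if $\phi^+$ were of any other type, the listed invariants would have to appear in the tables of \cite{JPR07} or in the non-E1-E1 tables of \cite{CM13}, and they do not. So the fatal gap is the missing smallness mechanism; the flopped-side identification is an avoidable overcomplication.
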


\begin{proof}
These cases were originally left open by \cite{ACM17} (Remarks 3.3, 4.3, 6.3) because the authors were not able to show the anticanonical morphism were small. Note that \cite{ACM17} Propositions 3.1, 4.1, and 6.1 settled both the existence of the smooth curve $C$ with given invariants existing on a Fano threefold $Y$ and that the threefold $X$ arising as the blow up of $C$ is weak Fano. The proof below uses the ideas in \cite{CLM19} to show the missing piece: that the anticanonical morphism $\psi: X \to X'$ is small. Once this is proven, the following argument then is applied to show the construction of the Sarkisov link must be that of the case on the E1-E1 table: If the morphism $\phi^+$ were \textit{not} of type E1, then the listed numerical possibilities would have appeared either in [\cite{JPR07}, 7.4, 7.7, p.486] or in the non-E1-E1 tables in \cite{CM13}. Since this is not the case for any of the numerical links listed, $\phi^+$ is of type E1.

In all cases that follow, let $C$ be smooth curve of degree d and genus g contained in a K3 surface $S$ in $Y$. On $X$, let $E$ be the exceptional divisor, $H$ the strict transform of the hyperplane section on $Y$, and $\widetilde S$ the strict transform of $S$. Since $\phi$ restricted to $\widetilde S$ is an isomorphism with $S$, on $\widetilde S$ we denote by $C$ the strict transform of $C$ on $S$. By \cite{Knu02}, the Picard group of $\widetilde S$ is generated by $H_{\widetilde S}$ and $C$.

To show the anticanonical divisor $\psi$ is small, we assume instead that $\psi$ contracts a divisor $D$. Since $H$ and $E$ generate Pic $X$, write $D \sim aH + bE$ for some integers $a$ and $b$ and since $D$ is contracted, $K_X^2 D = 0$.\\

\noindent We now proceed case by case using the notation above:

\begin{enumerate} 
\item[Case 10:] Let $C$ be a smooth curve of degree 4 and genus 1 contained in a K3 surface $S$ in $Y$. Since $D$ is contracted, $K_X^2D = 0$. Using $H^3 = 10, H^2E = 0, HE^2 = -d = -4$, and $E^3 = -rd+2-2g = -4$, we find $0=K_X^2D = (H-E)^2(aH+bE) = 6a+4b$. Solving for integers $a$ and $b$ then gives that $D \sim 2H_E$. Restricting to $\widetilde S$ and using $(H|_{\widetilde S})^2 = 10$, $H_{\widetilde S}C = 4$, and $C^2 = 2g - 2 = 0$, we find $(D|_{\widetilde S})^2 = (2H_{\widetilde S} - 3C)^2 = -8$. If $D$ is contracted to a curve, then $D$ is a conic bundle over a smooth curve $B$ by Proposition 1.8 \cite{JPR05} (whose result is attributed to Wilson \cite{W92}, \cite{W97}, Paoletti \cite{Pa98}, and Minagawa \cite{Mi03}). Since $\widetilde S \in |-K_X|$, we have $D|_{\widetilde{S}} =  a_1l_1 + \ldots + a_nl_n$, where each $l_i$ is a fiber of the conic bundle and the $a_i$ are integers. Since $\rho(\widetilde{S}) = 2$, $\widetilde{S}$  cannot contain two disjoint rational curves, because rational curves on a K3 surface are contractable -2 curves. Hence, $n=1$ and $D|_{\widetilde{S}} = a_1l_1.$  Then $(D|_{\widetilde{S}})^2 = -2a_1^2 = -8$ implies $a_1 = 2$. Then $2l_1 \sim 2H_{\widetilde S}-3C$ implies $l\sim\frac{1}{2}(2H_{\widetilde S}-3C)$, which is a contradiction. If $D$ is contracted to a point, then $-K_XD^2=(D|_{\widetilde S})^2 =  0 \neq -8$. Thus no $D$ is contracted and $\psi$ is small. \\

\item[Case 2:] Since $D$ is contracted by $\psi$, $0 = K_X^2D = (H-E)^2(aH-bE) = 6a+4$. So $D \sim 2H-3E$ and $D^2\vert_{\widetilde{S}} = (2H-3C)^3 = -10$. If $D$ is contracted to a curve, then $D$ is a conic bundle over a smooth curve $B$ \cite{JPR05}. Since $\widetilde S \in |-K_X|, D|_{\widetilde S} \sim a_1l_1 + \ldots + a_nl_n$ for integers $a_i$ and fibers $l_i$. But since $\rho(\widetilde S) = 2$, arguing as in Case 10, $n=1$. Then $(D|_{\widetilde{S}})^2 = -2a_1^2 \neq -10$. If $D$ is contracted to a point, then $-K_XD^2=0$ implies $(D|_{\widetilde{S}})^2 =0 \neq -10.$ Thus no $D$ is contracted and $\psi$ is small. \\

\item[Case 39:] Here $Y$ is the intersection of two quadrics in $\mathbb{P}^5$ with index $r = 2$. By Proposition 4.1 on \cite{ACM17}, there exists a smooth curve $C$ of degree 10 and genus 6 on $Y$ such that the blow-up of $Y$ along $C$ is a smooth weak Fano threefold of Picard number two. Using the same notation as above, $-K_X = 2H-E$, $E^3 = -30$, and $D\sim aH+bE$. Then $K_X^2D = 6a+10b$ so $D \sim 5H-3E$.  To show $|-K_X|$ gives a small contraction, consider $(D|_{\widetilde{S}})^2 = 25H^2 - 30HC + 9C^2 = -10$. Since $-10 \neq -2a^2$ for any integer $a$, $D$ cannot be contracted to a curve. In addition, as in Case 10, -10 is nonzero, thus $D$ is not  contracted to a point and $\psi$ is small.\\

\item[Case 67:] $Y$ is the intersection of two quadrics in $\mathbb{P}^5$ with index $r = 2$. By Proposition 4.1 in \cite{ACM17}, there exists a smooth curve $C$ of degree 8 and genus 3 on $Y$ such that the blow-up of $Y$ along $C$ is a smooth weak Fano threefold of Picard number two. Proceeding as in the previous cases, using $H^3 = 4$, $H^2E = 0$, $HE^2 = -8$, and $E^3 = -20$, we find $D \sim 3H - 2E$.   Using $(H_{\widetilde S})^2 = 8$, $H_{\widetilde S}C = 8$, and $C^2 = 4$, we find $(D|_{\widetilde S})^2 = -8$. As before, this implies that $D$ cannot be contracted to a point. If $D$ is contracted to a curve, then as in the previous cases, $D|_{\widetilde S}$ would be a multiple of a fibre of $D \to B$. However, in this case $\widetilde S$ does not contain any smooth rational curves, as such a curve would have self-intersection -2, and given $(H_{\widetilde S})^2 = 8$, $H_{\widetilde S}C = 8$, and $C^2 = 4$, we see that the square of any divisor class in $\widetilde S$ is divisible by $4$. Thus no $D$ is contracted and $\psi$ is small.\\

\item[Case 78:] By Proposition 6.1 in \cite{ACM17}, the blow up of the smooth curve $C$ with degree 4 and genus 0 in $S$ on $Y$ is a weak Fano threefold. Then $0 = K_X^2D =12a+6b$, so $D\sim H-2E$  and $(D|_{\widetilde{S}})^2 = -8$. If $D$ is contracted to a curve $B$, using similar arguments as in case 10, $l \sim \frac{1}{2}(H-2C)$ which is a contradiction. If $D$ is contracted to a point, then $0 = -K_XD^2 = (D|_{\widetilde{S}})^2 =-8$, a contradiction. Thus no $D$ is contracted and $\psi$ is small. \\

\item[Case 47:] By Proposition 3.1 in \cite{ACM17}, the blow up of the smooth curve $C$ with degree 12 and genus 11 in $S$ on $Y$ is a weak Fano threefold. To show $|-K_X|$ is small, consider $D \sim aH + bE$. Using $-K_X = 3H-E$ and $E^3 = -56$, compute $K_X^2D = 6a-16b$ so $D \sim 8H-3E$. Then $(D|_{\widetilde S})^2 = -12 \neq -2a_1^2$  so $D$ is not contracted to a curve. And -12 is certainly not 0 so $D$ is not contracted to a point. Thus no $D$ is contracted and $\psi$ is small.\\

\item[Case 72:] This is very similar to Case 39. Using $H^3 = 2$, $H^2E = 0$, $HE^2 = -10$, and $E^3 = -40$, we find $D \sim 5H - 2E$. Using $(H_{\widetilde S})^2 = 6$, $H_{\widetilde S}C = 10$, and $C^2 = 2g - 2 = 10$, we find $(D|_{\widetilde S})^2 = -10$. This leads to a contradiction as in Case 39.\\

\item[Case 102:] This is very similar to case 72. Using $-K_X = 3H-E, E^3 = -28$, and $D\sim aH+bE$, then $K^2_XD = 10a+20b$ so $D\sim 2H-E$. Then $(D|_{\widetilde S})^2 = -4.$ This leads to a contradiction as in case 39. 
\end{enumerate}

\end{proof}

\noindent \textbf{Remark 1:} The geometric realizability of the following E1-E1 cases still remain open: 28,59,61,80. See the next section for for why the methods used in Theorem 1 failed to produce links in these cases and possible alternative methods for future work. \\

In \cite{CM13}, one open case remained in the E2-E2 table. The following theorem shows this link is geometrically constructable.  
\begin{theorem}
The numerical invariants listed in the E2-E2 table, case number 3, is geometrically realizable. 
\end{theorem}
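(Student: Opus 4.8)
The plan is to settle case 3 by the same mechanism as Theorem 1, transported from a curve blow-up to a point blow-up. Guided by the numerical data of the case (in particular $E^3 = 1$ and the anticanonical degree), let $Y = V_d$ be the rank-one, index-two del Pezzo threefold of the relevant degree, with ample generator $H$, $-K_Y = 2H$, and $H^3 = d$, and let $\phi : X = \mathrm{Bl}_p Y \to Y$ be the $E2$ contraction of a suitable point $p$, so that $E \cong \mathbb{P}^2$, $E^3 = 1$, and $-K_X = 2\phi^*H - 2E$. First I would record the weak Fano property of $X$: as with the $E1$ cases handled in \cite{ACM17}, this is the part that is already available, the prospective flopping curves being the strict transforms of the finitely many lines $L \subset Y$ through $p$, each of which satisfies $-K_X\cdot\widetilde L = 2\,H\cdot L - 2 = 0$. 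The genuinely new content, exactly as in Theorem 1, is to prove that the anticanonical morphism $\psi : X \to X'$ is small.

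To prove smallness I would argue by contradiction, assuming $\psi$ contracts a divisor $D \sim aH + bE$, writing $H = \phi^*H$. With the point-blow-up intersection numbers $H^3 = d$, $H^2E = HE^2 = 0$, and $E^3 = 1$, the condition $K_X^2\cdot D = 0$ becomes $4(ad + b) = 0$, forcing $D \sim H - dE$ up to a positive multiple. I would then restrict $D$ to a general anticanonical K3 surface $\widetilde S \in |-K_X|$, which is the minimal resolution of a member $S \in |2H|$ acquiring a node at $p$. The relevant Picard classes of $\widetilde S$ are the pullback $H_{\widetilde S}$ and the exceptional $(-2)$-conic $e = \widetilde S \cap E$, with $H_{\widetilde S}^2 = 2d$, $e^2 = -2$, and $H_{\widetilde S}\cdot e = 0$. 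Hence $(D|_{\widetilde S})^2 = (H_{\widetilde S} - d\,e)^2 = 2d - 2d^2 = -2d(d-1)$.

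Exactly as in Case 10, a contracted $D$ leaves only two possibilities on $\widetilde S$, where $\rho(\widetilde S) = 2$ for a general member of the family. If $D$ were contracted to a curve $B$, then $D$ would be a conic bundle over $B$, and since $\widetilde S$ carries no two disjoint $(-2)$-curves it would meet $D$ in a single multiple fibre $a_1\ell_1$, giving $(D|_{\widetilde S})^2 = -2a_1^2$; but $-2d(d-1) = -2a_1^2$ would force $d(d-1)$ to be a perfect square, which is impossible for $d \geq 2$ because $d$ and $d-1$ are coprime and cannot both be squares. If $D$ were contracted to a point, then $-K_X\cdot D^2 = (D|_{\widetilde S})^2 = 0$, contradicting $-2d(d-1) \neq 0$. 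Therefore no divisor is contracted and $\psi$ is small. Invoking the flop $\chi$ of \cite{Ko89} to produce the smooth rank-two weak Fano $X^+$, I would then conclude by elimination: were $\phi^+$ not of type $E2$, the listed invariants would already appear in \cite{JPR07} or in the non-$E2$-$E2$ tables of \cite{CM13}, and since they do not, $\phi^+$ is again an $E2$ contraction and the link is realized.

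I expect the main obstacle to lie in the existence step rather than in the self-intersection bookkeeping, which is essentially formal once the lattice of $\widetilde S$ is in hand. Because the center of an $E2$ contraction is a point, there is no moduli of centers to exploit, so one must verify that a point $p$ exists whose blow-up is nef-but-not-ample with exactly the prescribed configuration of flopping lines, and that a nodal K3 surface $S \in |2H|$ singular precisely at $p$ is available to furnish $\widetilde S$. Establishing these existence statements, the analogue of the curve-existence propositions of \cite{ACM17}, is where I anticipate the real work.
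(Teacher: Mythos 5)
There is a genuine gap, and it starts at the very first step: you have misidentified the threefold $Y$. The E2--E2 case 3 has $-K_Y^3 = 10$ and $-K_X^3 = 2$, consistent with the E2 relation $-K_X^3 = -K_Y^3 - 8$; but an index-two del Pezzo threefold $V_d$ has $-K_{V_d}^3 = 8d$, and $8d = 10$ has no integer solution. The correct center is the index-\emph{one} prime Fano threefold $Y = X_{10}$ of genus $6$. Because of this misidentification, your entire numerical framework addresses a configuration that does not exist for these invariants: on the true $Y$ one has $-K_X = H - 2E$ (not $2H - 2E$), the contraction equation reads $0 = K_X^2 D = 10a + 4b$ rather than $4(ad+b)=0$, so $D \sim 2H - 5E$, and on the resolved nodal anticanonical K3 one would get $(D|_{\widetilde S})^2 = 40 - 50 = -10$, not $-2d(d-1)$. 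A repaired version of your lattice argument could plausibly be run with these numbers (note $-10 = -2a_1^2$ has no integer solution, and $-10 \neq 0$), but it is not the computation you wrote down. Note also that your proposed flopping curves are wrong in the corrected setting: lines through $P$ on $X_{10}$ satisfy $-K_X \cdot \widetilde L = 1 - 2 < 0$ and would destroy nefness, which is precisely why the center point must avoid all lines; the actual flopping curves come from conics through $P$.

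The second gap is one you flag yourself: you defer the existence and weak Fano verification (a point whose blow-up is nef-but-not-ample, plus a suitable nodal K3 with $\rho(\widetilde S)=2$) as ``the real work,'' so the proposal is incomplete by its own admission. The paper disposes of all of this in one stroke, with no K3 lattice argument at all: choosing $P \in X_{10}$ not on any line and citing the first Theorem of Section 3 of \cite{Reid80}, the linear system $|-K_X|$ on $X = \mathrm{Bl}_P X_{10}$ is free and \emph{already} defines a small birational morphism $\psi: X \to X'$. Freeness gives nefness, $-K_X^3 = 2 > 0$ gives bigness, the smooth Fano classification rules out $X$ being Fano, and the absence of these invariants from \cite{JPR07} forces the link to be of type E2--E2 (this last elimination step is the one part of your proposal that matches the paper). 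So the intended mechanism --- transporting the Theorem 1 smallness computation to a point blow-up --- is not what the paper does, and as written your version cannot be salvaged without first replacing $V_d$ by $X_{10}$ and then still supplying the existence inputs that \cite{Reid80} provides for free.
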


\begin{proof}
Pick a point $P \in Y = X_{10}$ not on any line, and let $\phi: X \to Y$ be the blowup of $P$. By \cite{Reid80}, Section 3, first Theorem, $|-K_X|$ is free and defines a small birational morphism $\psi: X \to X'$. By classification of smooth Fano threefolds, $X$ is not Fano since $-K_X^3 = 2$, so $X$ must be weak Fano. As this case is not on \cite{JPR07}, it is then of type E2-E2. 

\end{proof}

\noindent \textbf{Remark 2 - Two Corrections:} An error was corrected on the original E1-E2 table from \cite{CM13}. Case number 1 was originally listed as existing per \cite{Tak89}. This was a typo. This case has been updated to \textbf{not} exist by \cite{Kal09}, case number 24. 

In addition, a second error was corrected on the E2-E2 table, case number 2. Originally, this was marked with an ``x", citing \cite{Kal09}. This is now updated to exist per \cite{Kal09} (case number 22).\\

\noindent \textbf{Remark 3 - E5-E5 Update:} The only open numerical case on the E5-E5 table has been updated to show existence. Its construction can be found in Cheltsov and J. Park's, \textit{Sextic double solids} \cite{CP10}, Example 1.5. As a side note, in the same paper, Example 1.6 is a construction of the (already known to exist) E3/4 - E3/4 case number 2. \\

The tables listed in the appendix are now the most up-to-date as to the status of classifying weak Fano threefolds with small anticanonical contraction and Picard number two. \\

%%%%%%%%%%%%%%%%%%%%%%%%Tables%%%%%%%%%%%%%%%%%%%%
\newpage
\section{Tables}
The below tables provide the complete numerical classification of all smooth weak Fano threefolds with small anticanonical morphism and Picard number two, along with a reference to the geometric construction. There are four open cases remaining. Any numbers not appearing were on the original paper \cite{CM13} and have since been shown to not exist.

The notation used in the tables below are as follows: let $C$ be a smooth curve of genus $g$ and degree $d$ on a smooth Fano threefold $Y$ with $\rho(Y)=1$. Let $-K_Y$ be the canonical divisor on $Y$ and let $X$ be the blow up of $Y$ centered at $C$. The index of $Y$ and $Y^+$ are $r$ and $r^+$ respectively. The strict transform of a divisor $D \in \text{Pic} X$ across the flop $\chi$ is denoted $\widetilde{D}$. Write $\widetilde{E^+} = \alpha(-K_X) + \beta E$ and similarly let $\widetilde{E} = \alpha^+(-K_{X^+}) + \beta^+E^+$. The defect $e$ of the flop is defined as $E^3 - \widetilde{E}^3$. Define $d^+$, $g^+$, and $-K_{Y^+}$ similarly for the blow up of $Y^+$ centered at $C^+$ on the right side of the 2-ray diagram. 

\label{E1E1table}
\begin{table}[h]
\caption{E1-E1}
\begin{center}
\begin{tabular}{|c|c|c|c|c|c|c|c|c|c|c|c|c|c|c|}
\hline
\textit{No.} & $-K_X^3$ & $-K_Y^3$ & $-K_{Y^+}^3$ & $\alpha$ & $\beta$ & $r$ & $d$ & $g$ & $r^+$ & $d^+$ & $g^+$ & $e/r^3$ & Exist? & Ref \\ \hline \hline
$\textit{1.}$ & 2 & 6 & 6 & 3 & -1 & 1 & 1 & 0 & 1 & 1 & 0 & 47 & :) & \cite{Isk78} \\ \hline
$\textit{2.}$ & 2 & 8 & 8 & 4 & -1 & 1 & 2 & 0 & 1 & 2 & 0 & 88 & :) & Thm 3.1 \\ \hline
$\textit{3.}$ & 2 & 10 & 10 & 5 & -1 & 1 & 3 & 0 & 1 & 3 & 0 & 153 & :) & \cite{ACM17} \\ \hline
$\textit{4.}$ & 2 & 12 & 12 & 6 & -1 & 1 & 4 & 0 & 1 & 4 & 0 & 248 & :) & \cite{ACM17} \\ \hline
$\textit{5.}$ & 2 & 14 & 14 & 7 & -1 & 1 & 5 & 0 & 1 & 5 & 0 & 379 & :) & \cite{ACM17} \\ \hline
$\textit{6.}$ & 2 & 16 & 16 & 8 & -1 & 1 & 6 & 0 & 1 & 6 & 0 & 552 & :) & \cite{ACM17} \\ \hline
$\textit{7.}$ & 2 & 18 & 18 & 9 & -1 & 1 & 7 & 0 & 1 & 7 & 0 & 773 & :) & \cite{ACM17} \\ \hline
$\textit{8.}$ & 2 & 22 & 22 & 11 & -1 & 1 & 9 & 0 & 1 & 9 & 0 & 1383 & :) & \cite{ACM17} \\ \hline
$\textit{10.}$ & 2 & 10 & 10 & 4 & -1 & 1 & 4 & 1 & 1 & 4 & 1 & 56 & :) & Thm 3.1 \\ \hline
$\textit{11.}$ & 2 & 12 & 12 & 5 & -1 & 1 & 5 & 1 & 1 & 5 & 1 & 115 & :) & \cite{ACM17} \\ \hline
$\textit{12.}$ & 2 & 14 & 14 & 6 & -1 & 1 & 6 & 1 & 1 & 6 & 1 & 204 & :) & \cite{ACM17} \\ \hline
$\textit{13.}$ & 2 & 16 & 16 & 7 & -1 & 1 & 7 & 1 & 1 & 7 & 1 & 329 & :) & \cite{ACM17} \\ \hline
$\textit{14.}$ & 2 & 18 & 18 & 8 & -1 & 1 & 8 & 1 & 1 & 8 & 1 & 496 & :) &  \cite{ACM17} \\ \hline
$\textit{15.}$ & 2 & 22 & 22 & 10 & -1 & 1 & 10 & 1 & 1 & 10 & 1 & 980 & :) & \cite{ACM17} \\ \hline
$\textit{18.}$ & 2 & 16 & 16 & 6 & -1 & 1 & 8 & 2 & 1 & 8 & 2 & 160 & :) & \cite{ACM17} \\ \hline
$\textit{19.}$ & 2 & 18 & 18 & 7 & -1 & 1 & 9 & 2 & 1 & 9 & 2 & 279 & :) & \cite{ACM17} \\ \hline
$\textit{20.}$ & 2 & 22 & 22 & 9 & -1 & 1 & 11 & 2 & 1 & 11 & 2 & 649 & :) & \cite{ACM17}  \\ \hline
$\textit{23.}$ & 2 & 22 & 22 & 8 & -1 & 1 & 12 & 3 & 1 & 12 & 3 & 384 & :) &  \cite{ACM17} \\ \hline
$\textit{28.}$ & 2 & 16 & 16 & 8 & -1 & 2 & 3 & 0 & 2 & 3 & 0 & 69 & ? &  \\ \hline
$\textit{29.}$ & 2 & 24 & 24 & 12 & -1 & 2 & 5 & 0 & 2 & 5 & 0 & 223 & :) & \cite{CM13} \\ \hline
$\textit{30.}$ & 2 & 32 & 32 & 16 & -1 & 2 & 7 & 0 & 2 & 7 & 0 & 521 & :) & \cite{ACM17} \\ \hline
$\textit{31.}$ & 2 & 40 & 40 & 20 & -1 & 2 & 9 & 0 & 2 & 9 & 0 & 1011 & :) &  \cite{ACM17}  \\ \hline
$\textit{33.}$ & 2 & 24 & 24 & 10 & -1 & 2 & 6 & 2 & 2 & 6 & 2 & 114 & :) & \cite{CM13}  \\ \hline
$\textit{34.}$ & 2 & 32 & 32 & 14 & -1 & 2 & 8 & 2 & 2 & 8 & 2 & 328 & :) & \cite{ACM17} \\ \hline
$\textit{35.}$ & 2 & 40 & 40 & 18 & -1 & 2 & 10 & 2 & 2 & 10 & 2 & 710 & :) &  \cite{ACM17} \\ \hline
$\textit{37.}$ & 2 & 32 & 32 & 12 & -1 & 2 & 9 & 4 & 2 & 9 & 4 & 183 & :) & \cite{ACM17}  \\ \hline
\end{tabular}
\label{tb:E1E1Table1}
\end{center}
\end{table}

\begin{table}
\caption{E1-E1 (continued)}
\begin{center}
\begin{tabular}{|c|c|c|c|c|c|c|c|c|c|c|c|c|c|c|}
\hline
\textit{No.} & $-K_X^3$ & $-K_Y^3$ & $-K_{Y^+}^3$ & $\alpha$ & $\beta$ & $r$ & $d$ & $g$ & $r^+$ & $d^+$ & $g^+$ & $e/r^3$ & Exist? & Ref \\ \hline \hline

$\textit{38.}$ & 2 & 40 & 40 & 16 & -1 & 2 & 11 & 4 & 2 & 11 & 4 & 469 & :) &  \cite{ACM17}  \\ \hline
$\textit{39.}$ & 2 & 32 & 32 & 10 & -1 & 2 & 10 & 6 & 2 & 10 & 6 & 80 & :) & Thm 3.1 \\ \hline
$\textit{40.}$ & 2 & 40 & 40 & 14 & -1 & 2 & 12 & 6 & 2 & 12 & 6 & 282 & :) &  \cite{ACM17}  \\ 
$\textit{41.}$ & 2 & 32 & 32 & 8 & -1 & 2 & 11 & 8 & 2 & 11 & 8 & 13 & :) & \cite{ACM17}  \\ \hline
$\textit{42.}$ & 2 & 40 & 40 & 12 & -1 & 2 & 13 & 8 & 2 & 13 & 8 & 143 & :) &  \cite{ACM17}  \\ \hline
$\textit{44.}$ & 2 & 54 & 54 & 25 & -1 & 3 & 9 & 2 & 3 & 9 & 2 & 571 & :) & \cite{ACM17} \\ \hline
$\textit{45.}$ & 2 & 54 & 54 & 22 & -1 & 3 & 10 & 5 & 3 & 10 & 5 & 372 & :) & \cite{ACM17} \\ \hline
$\textit{46.}$ & 2 & 54 & 54 & 19 & -1 & 3 & 11 & 8 & 3 & 11 & 8 & 221 & :) & \cite{ACM17} \\ \hline
$\textit{47.}$ & 2 & 54 & 54 & 16 & -1 & 3 & 12 & 11 & 3 & 12 & 11 & 112 & :) & Thm 3.1\\ \hline
$\textit{48.}$ & 2 & 54 & 54 & 13 & -1 & 3 & 13 & 14 & 3 & 13 & 14 & 39 & :) & \cite{ACM17} \\ \hline
$\textit{49.}$ & 2 & 64 & 64 & 30 & -1 & 4 & 8 & 2 & 4 & 8 & 2 & 418 & :) & \cite{CM13}  \\ \hline
$\textit{50.}$ & 2 & 64 & 64 & 26 & -1 & 4 & 9 & 6 & 4 & 9 & 6 & 261 & :) & \cite{CM13} \\ \hline
$\textit{51.}$ & 2 & 64 & 64 & 22 & -1 & 4 & 10 & 10 & 4 & 10 & 10 & 146 & :) & \cite{CM13} \\ \hline
$\textit{52.}$ & 2 & 64 & 64 & 18 & -1 & 4 & 11 & 14 & 4 & 11 & 14 & 67 & :) & \cite{CM13}  \\ \hline
$\textit{54.}$ & 4 & 10 & 10 & 2 & -1 & 1 & 2 & 0 & 1 & 2 & 0 & 28 & :) & \cite{Tak89} \\ \hline
$\textit{55.}$ & 4 & 14 & 14 & 3 & -1 & 1 & 4 & 0 & 1 & 4 & 0 & 68 & :) & \cite{ACM17} \\ \hline
$\textit{56.}$ & 4 & 18 & 18 & 4 & -1 & 1 & 6 & 0 & 1 & 6 & 0 & 144 & :) & \cite{ACM17} \\ \hline
$\textit{57.}$ & 4 & 22 & 22 & 5 & -1 & 1 & 8 & 0 & 1 & 8 & 0 & 268 & :) & \cite{ACM17} \\ \hline
$\textit{59.}$ & 4 & 16 & 16 & 3 & -1 & 1 & 6 & 1 & 1 & 6 & 1 & 42 & ? &  \\ \hline
$\textit{61.}$ & 4 & 22 & 22 & 4 & -1 & 1 & 10 & 2 & 1 & 10 & 2 & 80 & ? &  \\ \hline
$\textit{63.}$ & 4 & 24 & 14 & 2.5 & -0.5 & 2 & 5 & 1 & 1 & 5 & 1 & 25 & :) & \cite{Isk78} \\ \hline
$\textit{64.}$ & 4 & 32 & 18 & 3.5 & -0.5 & 2 & 7 & 1 & 1 & 7 & 1 & 77 & :) & \cite{ACM17} \\ \hline
$\textit{65.}$ & 4 & 40 & 22 & 4.5 & -0.5 & 2 & 9 & 1 & 1 & 9 & 1 & 171 & :) &  \cite{ACM17} \\ \hline
$\textit{67.}$ & 4 & 32 & 32 & 6 & -1 & 2 & 8 & 3 & 2 & 8 & 3 & 40 & :) & Thm 3.1 \\ \hline
$\textit{68.}$ & 4 & 40 & 40 & 8 & -1 & 2 & 10 & 3 & 2 & 10 & 3 & 110 & :) &  \cite{ACM17} \\ \hline
$\textit{69.}$ & 4 & 40 & 40 & 6 & -1 & 2 & 12 & 7 & 2 & 12 & 7 & 18 & :) &  \cite{ACM17} \\ \hline
$\textit{70.}$ & 4 & 54 & 16 & 11/3 & -1/3 & 3 & 9 & 3 & 1 & 5 & 0 & 103 & :) & \cite{ACM17} \\ \hline
$\textit{71.}$ & 4 & 54 & 54 & 13 & -1 & 3 & 8 & 0 & 3 & 8 & 0 & 164 & :) & \cite{ACM17} \\ \hline
$\textit{72.}$ & 4 & 54 & 54 & 10 & -1 & 3 & 10 & 6 & 3 & 10 & 6 & 60 & :) & Thm 3.1 \\ \hline
$\textit{74.}$ & 4 & 64 & 12 & 2.75 & -0.25 & 4 & 9 & 7 & 1 & 3 & 0 & 45 & :) & \small{[Tak89,IP99]} \\ \hline
$\textit{75.}$ & 4 & 64 & 64 & 14 & -1 & 4 & 8 & 3 & 4 & 8 & 3 & 82 & :) & \cite{CM13} \\ \hline
$\textit{76.}$ & 4 & 64 & 64 & 10 & -1 & 4 & 10 & 11 & 4 & 10 & 11 & 20 & :) & \cite{JPR05}  \\ \hline
$\textit{77.}$ & 6 & 10 & 10 & 1 & -1 & 1 & 1 & 0 & 1 & 1 & 0 & 11 & :) & \cite{Isk78} \\ \hline
$\textit{78.}$ & 6 & 16 & 16 & 2 & -1 & 1 & 4 & 0 & 1 & 4 & 0 & 32 & :) & Thm 3.1 \\ \hline
$\textit{79.}$ & 6 & 22 & 22 & 3 & -1 & 1 & 7 & 0 & 1 & 7 & 0 & 89 & :) &  \cite{ACM17} \\ \hline
$\textit{80.}$ & 6 & 18 & 18 & 2 & -1 & 1 & 6 & 1 & 1 & 6 & 1 & 12 & ? &  \\ \hline
$\textit{81.}$ & 6 & 40 & 18 & 2.5 & -0.5 & 2 & 9 & 2 & 1 & 5 & 0 & 47 & :) &  \cite{ACM17} \\ \hline
$\textit{83.}$ & 6 & 40 & 40 & 6 & -1 & 2 & 8 & 0 & 2 & 8 & 0 & 82 & :) &  \cite{ACM17}  \\ \hline
$\textit{84.}$ & 6 & 32 & 32 & 4 & -1 & 2 & 7 & 2 & 2 & 7 & 2 & 17 & :) & \cite{ACM17} \\ \hline
$\textit{86.}$ & 6 & 54 & 22 & 8/3 & -1/3 & 3 & 8 & 1 & 1 & 8 & 1 & 48 & :) & \cite{ACM17} \\ \hline
$\textit{87.}$ & 6 & 54 & 12 & 5/3 & -1/3 & 3 & 10 & 7 & 1 & 2 & 0 & 14 & :) & [\cite{Tak89} \\ \hline
$\textit{88.}$ & 6 & 54 & 54 & 7 & -1 & 3 & 9 & 4 & 3 & 9 & 4 & 31 & :) & \cite{ACM17} \\ \hline
$\textit{89.}$ & 6 & 64 & 40 & 4.5 & -0.5 & 4 & 8 & 4 & 2 & 10 & 4 & 24 & :) & \cite{CM13}  \\ \hline
$\textit{90.}$ & 6 & 64 & 64 & 10 & -1 & 4 & 7 & 0 & 4 & 7 & 0 & 47 & :) & \cite{CM13}  \\ \hline
$\textit{91.}$ & 6 & 64 & 64 & 6 & -1 & 4 & 10 & 12 & 4 & 10 & 12 & 2 & x & \cite{CM13} \\ \hline
\end{tabular}
\label{tb:E1E1Table2}
\end{center}
\end{table}

\begin{table}
\caption{E1-E1 (continued)}
\begin{center}
\begin{tabular}{|c|c|c|c|c|c|c|c|c|c|c|c|c|c|c|}
\hline
\textit{No.} & $-K_X^3$ & $-K_Y^3$ & $-K_{Y^+}^3$ & $\alpha$ & $\beta$ & $r$ & $d$ & $g$ & $r^+$ & $d^+$ & $g^+$ & $e/r^3$ & Exist? & Ref \\ \hline \hline
$\textit{92.}$ & 8 & 14 & 14 & 1 & -1 & 1 & 2 & 0 & 1 & 2 & 0 & 10 & :) & \cite{Tak89} \\ \hline
$\textit{93.}$ & 8 & 22 & 22 & 2 & -1 & 1 & 6 & 0 & 1 & 6 & 0 & 36 & :) & \cite{ACM17} \\ \hline
$\textit{94.}$ & 8 & 40 & 16 & 1.5 & -0.5 & 2 & 9 & 3 & 1 & 3 & 0 & 12 & :) &  \cite{ACM17}  \\ \hline
$\textit{96.}$ & 8 & 40 & 40 & 4 & -1 & 2 & 8 & 1 & 2 & 8 & 1 & 28 & :) &  \cite{ACM17}  \\ \hline
$\textit{97.}$ & 8 & 54 & 18 & 5/3 & -1/3 & 3 & 8 & 2 & 1 & 4 & 0 & 20 & :) & \cite{ACM17}  \\ \hline
$\textit{98.}$ & 8 & 64 & 22 & 1.75 & -0.25 & 4 & 7 & 1 & 1 & 7 & 1 & 14 & :) & \cite{CM13}  \\ \hline
$\textit{99.}$ & 8 & 64 & 64 & 6 & -1 & 4 & 8 & 5 & 4 & 8 & 5 & 10 & :) & \cite{BL12}  \\ \hline
$\textit{100.}$ & 10 & 18 & 18 & 1 & -1 & 1 & 3 & 0 & 1 & 3 & 0 & 9 & :) &  \cite{ACM17}  \\ \hline
$\textit{101.}$ & 10 & 40 & 22 & 1.5 & -0.5 & 2 & 7 & 0 & 1 & 5 & 0 & 18 & :) &  \cite{ACM17}  \\ \hline
$\textit{102.}$ & 10 & 54 & 54 & 4 & -1 & 3 & 8 & 3 & 3 & 8 & 3 & 8 & :) & Thm 3.1  \\ \hline
$\textit{103.}$ & 10 & 64 & 32 & 2.5 & -0.5 & 4 & 7 & 2 & 2 & 5 & 0 & 9 & :) & \cite{CM13}  \\ \hline
$\textit{104.}$ & 12 & 22 & 22 & 1 & -1 & 1 & 4 & 0 & 1 & 4 & 0 & 8 & :) & \cite{ACM17} \\ \hline
$\textit{105.}$ & 12 & 54 & 40 & 7/3 & -2/3 & 3 & 7 & 1 & 2 & 7 & 1 & 7 & :) & \cite{ACM17}  \\ \hline
$\textit{106.}$ & 12 & 64 & 16 & 0.75 & -0.25 & 4 & 7 & 3 & 1 & 1 & 0 & 5 & :) & \cite{Isk78} \\ \hline
$\textit{107.}$ & 14 & 40 & 40 & 2 & -1 & 2 & 6 & 0 & 2 & 6 & 0 & 6 & :) & \cite{ChSh11}  \\ \hline
$\textit{108.}$ & 14 & 54 & 18 & 2/3 & -1/3 & 3 & 7 & 2 & 1 & 1 & 0 & 4 & :) & \cite{Isk78} \\ \hline
$\textit{109.}$ & 16 & 54 & 22 & 2/3 & -1/3 & 3 & 6 & 0 & 1 & 2 & 0 & 4 & :) & \cite{Tak89} \\ \hline
$\textit{110.}$ & 18 & 40 & 22 & 0.5 & -0.5 & 2 & 5 & 0 & 1 & 1 & 0 & 3 & :) & \cite{Isk78} \\ \hline
$\textit{111.}$ & 22 & 64 & 64 & 2 & -1 & 4 & 5 & 0 & 4 & 5 & 0 & 1 & :) & \cite{CM13} \\  \hline
\end{tabular}
\label{tb:E1E1Table3}
\end{center}
\end{table}
\clearpage
\newpage

\label{E1E2table}
\begin{table}[h]
\caption{E1-E2}
\begin{center}
\begin{tabular}{|c|c|c|c|c|c|c|c|c|c|c|c|}
\hline
\textit{No.} & $-K_X^3$ & $-K_Y^3$ & $-K_{Y^+}^3$ & $\alpha$ & $\beta$ & $r$ & $d$ & $g$ & $e/r^3$ & Exist? & Ref \\ \hline \hline
$\textit{1.}$ & 4 & 40 & 12 & 5/2 & -1/2 & 2 & 12 & 7 & 24 & x & \cite{Kal09} \\ \hline
$\textit{2.}$ & 6 & 24 & 14 & 3/2 & -1/2 & 2 & 4 & 0 & 16 & :) & \cite{Tak89} \\ \hline
$\textit{3.}$ & 14 & 64 & 22 & 3/4 & -1/4 & 4 & 6 & 0 & 6 & :) & \cite{Tak89} \\ \hline
\end{tabular}
\label{tb:E1E2Table}
\end{center}
\end{table}

\label{E1E3table}
\begin{table}[h]
\caption{E1-E3/E4}
\begin{center}
\begin{tabular}{|c|c|c|c|c|c|c|c|c|c|c|c|}
\hline
\textit{No.} & $-K_X^3$ & $-K_Y^3$ & $-K_{Y^+}^3$ & $\alpha$ & $\beta$ & $r$ & $d$ & $g$ & $e/r^3$ & Exist? & Ref \\ \hline \hline
$\textit{4.}$ & 10 & 64 & 12 & 0.75 & -0.25 & 4 & 8 & 6 & 5 & :) & \cite{CM13} \\ \hline
$\textit{5.}$ & 12 & 54 & 14 & 2/3 & -1/3 & 3 & 8 & 4 & 4 & :) & \cite{CM13} \\ \hline
$\textit{6.}$ & 14 & 32 & 16 & 0.5 & -0.5 & 2 & 4 & 0 & 4 & :) & \cite{CM13} \\ \hline
$\textit{7.}$ & 16 & 40 & 18 & 0.5 & -0.5 & 2 & 6 & 1 & 3 & :) & \cite{CM13} \\ \hline
\end{tabular}
\label{tb:E1E3Table}
\end{center}
\end{table}

\label{E1E5table}
\begin{table}[h]
\caption{E1-E5}
\begin{center}
\begin{tabular}{|c|c|c|c|c|c|c|c|c|c|c|c|}
\hline
\textit{No.} & $-K_X^3$ & $-K_Y^3$ & $-K_{Y^+}^3$ & $\alpha$ & $\beta$ & $r$ & $d$ & $g$ & $e/r^3$ & Exist? & Ref \\ \hline \hline
$\textit{3.}$ & 8 & 64 & 17/2 & 0.75 & -0.25 & 4 & 9 & 9 & 6 & :) & \cite{CM13} \\ \hline
$\textit{4.}$ & 10 & 24 & 21/2 & 0.5 & -0.5 & 2 & 3 & 0 & 6 & :) & \cite{CM13} \\ \hline
$\textit{5.}$ & 10 & 54 & 21/2 & 2/3 & -1/3 & 3 & 9 & 6 & 5 & :) & \cite{CM13} \\ \hline
$\textit{6.}$ & 12 & 32 & 25/2 & 0.5 & -0.5 & 2 & 5 & 1 & 5 & :) & \cite{CM13} \\ \hline
$\textit{7.}$ & 14 & 40 & 29/2 & 0.5 & -0.5 & 2 & 7 & 2 & 4 & :) & \cite{CM13} \\ \hline
\end{tabular}
\label{tb:E1E5Table}
\end{center}
\end{table}

\label{E2E2table}
\begin{table}[!h]
\caption{E2-E2}
\begin{center}
\begin{tabular}{|c|c|c|c|c|c|c|c|}
\hline
\textit{No.} & $-K_X^3$ & $-K_{Y}^3$ & $\alpha$ & $\beta$  & $e$ & Exist? & Ref \\ \hline
$\textit{1.}$ & 8 & 16 & 1 & -1 & 12 & :)  & \cite{Tak89} \\ \hline
$\textit{2.}$ & 4 & 12 & 2 & -1 & 30 & :) & \cite{Kal09} \\ \hline
$\textit{3.}$ & 2 & 10 & 4 & -1 & 90 & :) & Thm 3.2 \\ \hline
\end{tabular}
\label{tb:E2E2Table}
\end{center}
\end{table}

\label{E3E3table}
\begin{table}[h]
\caption{E3/4-E3/4}
\begin{center}
\begin{tabular}{|c|c|c|c|c|c|c|c|}
\hline
\textit{No.} & $-K_X^3$ & $-K_{Y}^3$ & $\alpha$ & $\beta$  & $e$ & Exist? & Ref \\ \hline
$\textit{1.}$ & 4 & 6 & 1 & -1 & 12 & :) & \cite{Kal09} \\ \hline
$\textit{2.}$ & 2 & 4 & 2 & -1 & 24 & :) & \cite{Puk88} \\ \hline
\end{tabular}
\label{tb:E3E3Table}
\end{center}
\end{table}

\label{E5E5table}
\begin{table}[h]
\caption{E5-E5}
\begin{center}
\begin{tabular}{|c|c|c|c|c|c|c|c|}
\hline
\textit{No.} & $-K_X^3$ & $-K_{Y}^3$ & $\alpha$ & $\beta$  & $e$ & Exist? & Ref \\ \hline
$\textit{1.}$ & 2 & 2.5 & 1 & -1 & 15 & :) & \cite{CP10} \\ \hline
\end{tabular}
\label{tb:E5E5Table}
\end{center}
\end{table}
\clearpage
\newpage

\section*{Acknowledgments}
We would like to thank our lifetime advisor and now colleague, V.V. Shokurov, for providing this classification question to the authors so long ago. It has inspired many years of fun and fruitful mathematics. In addition, the authors would like to thank V. Cheltsov for always motivating and encouraging the authors to continue in their work, and for his amazing ability to always make a global community feel local.

%%%%%%%%%%%%%%%%%%%%%%%%%%%%%%%%
\newpage

\end{document}